 \newtheorem{thm}{Theorem}[section]
 \newtheorem{cor}[thm]{Corollary}
 \newtheorem{lem}[thm]{Lemma}
 \theoremstyle{definition}
 \newtheorem{defn}[thm]{Definition}
 \theoremstyle{remark}
 \newtheorem{rem}[thm]{Remark}
 \newtheorem*{ex}{Example}
\numberwithin{equation}{section}
\newcommand{\ccomma}{\mathpunct{\raisebox{0.5ex}{,}}}
\begin{document}
%------------------------------------------------------------------------- % editorial commands: to be inserted by the editorial office % %\firstpage{1} \volume{228} \Copyrightyear{2004} \DOI{003-0001} % % %\seriesextra{Just an add-on} %\seriesextraline{This is the Concrete Title of this Book\br H.E. R and S.T.C. W, Eds.} % % for journals: % %\firstpage{1} %\issuenumber{1} %\Volumeandyear{1 (2004)} %\Copyrightyear{2004} %\DOI{003-xxxx-y} %\Signet %\commby{inhouse} %\submitted{March 14, 2003} %\received{March 16, 2000} %\revised{June 1, 2000} %\accepted{July 22, 2000} % % % %---------------------------------------------------------------------------

\title[On the roots of a hyperbolic polynomial pencil]
 {On the roots of a hyperbolic polynomial pencil}

\author[Victor Katsnelson]{Victor Katsnelson}

\address{%
Department of Mathematics, Weizmann Institute, Rehovot, 7610001, Israel}

\email{victor.katsnelson@weizmann.ac.il; victorkatsnelson@gmail.com}

%\thanks{This work was completed with the support of our %\TeX-pert.}

\subjclass{11C99, 26C10, 26C15, 15A22, 42A82 }

\keywords{Hyperbolic polynomial pencil, determinant representation,  exponentially
convex functions.}

\date{January 1, 2016}
\dedicatory{}
%%% ----------------------------------------------------------------------

\begin{abstract}{\ \ }
Let \(\nu_0(t),\nu_1(t),\,\ldots\,,\nu_n(t)\) be the roots of the
equation \(R(z)=t\), where \(R(z)\) be a rational function of the form
\[R(z)=z-\sum\limits_{k=1}^n\frac{\alpha_k}{z-\mu_k},\]
\(\mu_k\) are pairwise different real numbers, \(\alpha_k>0,\,1\leq{}k\leq{}n\).
Then for each real \(\xi\), the function
\(e^{\xi\nu_0(t)}+e^{\xi\nu_1(t)}+\,\cdots\,+e^{\xi\nu_n(t)}\) is exponentially convex
on the interval \(-\infty<t<\infty\).
\end{abstract}
\maketitle
\section{Roots of the equation \(\boldsymbol{R(z)=t}\) as functions of \(\boldsymbol{t}\).}
In the present paper we discuss questions related to properties of roots of the equation
\begin{equation}
\label{Eqt}
R(z)=t
\end{equation}
as functions of the parameter \(t\in\mathbb{C}\),
where \(R\) is a rational function of the form
\begin{equation}
\label{RaF}
R(z)=z-\sum\limits_{1\leq k \leq n}\frac{\alpha_{k}}{z-\mu_{k}}\ccomma
\end{equation}
\(\mu_k\) are pairwise different real numbers, \(\alpha_k>0\),
 \(1\leq k\leq n\).  We adhere to
 the enumeration agreement%
 \footnote{We assume that \(n\geq1\).}
 \begin{equation}
 \label{EnAg}
 \mu_1>\mu_2>\,\cdots\,>\mu_n.
 \end{equation}

The function \(R\) is representable in the form
\begin{equation}
 \label{RR}
 R(z)=\frac{P(z)}{Q(z)},
 \end{equation}
 where
 \begin{gather}
 \label{QDe}
 Q(z)=(z-\mu_1)\cdot(z-\mu_2)\cdot\,\cdots\,\cdot(z-\mu_n),\\
 \label{PDe}
 P(z)\stackrel{\textup{def}}{=}R(z)\cdot{}Q(z)
 \end{gather}
 are monic polynomials of degrees
 \begin{equation}
 \label{Deg}
 \textup{deg}\,P=n+1,\quad \textup{deg}\,Q=n.
 \end{equation}
 Since \(P(\mu_k)=-\alpha_k Q^{\prime}(\mu_k)\not=0\), the polynomials \(P\) and \(Q\) have no common roots. Thus the ratio in the right hand side of \eqref{RR} is irreducible.
 The equation \eqref{Eqt} is equivalent to the equation
 \begin{equation}
 \label{EqEq}
 P(z)-tQ(z)=0.
 \end{equation}
 Since the polynomial \(P(z)-tQ(z)\) is of degree \(n+1\), the latter equation has \(n+1\) roots for each \(t\in\mathbb{C}\).

 The function \(R\) possess the property
 \begin{equation}
 \label{NePr}
 \textup{Im}\,R(z)\big/\textup{Im}\,z>0 \ \  \textup{if}\ \textup{Im}\,z\not= 0.
 \end{equation}
  Therefore if \(\textup{Im}\,t>0\), all roots of the equation
 \eqref{Eqt}, which is equivalent to the equation \eqref{EqEq}, are located in the half-plane \(\textup{Im}\,z>0\). Some of these roots may be multiple.

 However if \(t\) is real, all roots of the equation \eqref{Eqt} are real and simple,
 i.e. of multiplicity one. Thus for real \(t\), the equation \eqref{Eqt} has \(n+1\)
 pairwise different real roots
 \(\nu_k(t)\): \(\nu_0(t)>\nu_1(t)>\,\cdots\,>\nu_{n-1}(t)>\nu_n(t)\).
 Moreover for each real \(t\), the poles \(\mu_k\) of the function \(R\) and the roots \(\nu_k(t)\) of the equation \eqref{Eqt} are interspersed:
 \begin{multline}
\label{InSp}
\nu_0(t)>\mu_1>\nu_1(t)>\mu_2>\nu_2(t)>\,\,\cdots\,\,>\nu_{n-1}(t)>\mu_n>\nu_{n}(t),\\
\ \forall\,t\in\mathbb{R}.
 \end{multline}
 In particular for \(t=0\), the roots \(\nu_k(0)=\lambda_k\) of the equation \eqref{Eqt} are the roots
 of the polynomial \(P\):
\begin{gather}
\label{PRo}
P(z)=(z-\lambda_0)\cdot(z-\lambda_1)\cdot\,\,\cdots\,\,\cdot(z-\lambda_n),\\
\lambda_0>\mu_1>\lambda_1>\mu_2>\lambda_2>\,\,\cdots\,\,>\lambda_{n-1}>\mu_n>\lambda_{n}.
\end{gather}
Since \(R^{\prime}(x)>0\) for \(x\in\mathbb{R},\,x\not=\mu_1,\,\ldots\,,\mu_n\),
each of the functions \(\nu_k(t),\,k=0,1,\,\ldots\,,n\), can be continued as
a single valued holomorphic function to some neighborhood of \(\mathbb{R}\).
However the functions \(\nu_k(t)\) can not be continued as single-valued
analytic functions to the whole complex \(t\)-plane.  According to \eqref{RR},
\begin{equation}
\label{DR}
R^{\prime}(z)=\frac{P^{\prime}(z)Q(z)-Q^{\prime}(z)P(z)}{Q^2(z)}.
\end{equation}
The polynomial \(P^{\prime}Q-Q^{\prime}P\) is of degree \(2n\)
and  is strictly positive on the real axis. Therefore this
polynomial has \(n\) roots \(\zeta_1,\,\ldots\,,\zeta_n\)
in the upper half-plane \(\textup{Im}(z)>0\) and \(n\) roots
\(\overline{\zeta_1},\,\ldots\,,\overline{\zeta_n}\)
in the lower half-plane \(\textup{Im}(z)<0\).
(Not all roots \(\zeta_1,\,\ldots\,,\zeta_n\) must be different.)
The points \(\zeta_1,\,\ldots\,,\zeta_n\) and \(\overline{\zeta_1},\,\ldots\,,\overline{\zeta_n}\) are the critical points of the function \(R\):
\(R^{\prime}(\zeta_k)=0,\,R^{\prime}(\overline{\zeta_k})=0,\ 1\leq k\leq n.\)
The critical values \(t_k=R(\zeta_k),\,\overline{t_k}=R(\overline{\zeta_k}),\ 1\leq k\leq n,\) of the function \(R\) are the ramification points of
the function \(\nu(t)\):
\begin{equation}
\label{RoF}
R(\nu(t))=t
\end{equation}
 (Even if the critical points \(\zeta^{\prime}\) and \(\zeta^{\prime\prime}\) of
 \(R\) are different, the critical values \(R(\zeta^{\prime})\) and
 \(R(\zeta^{\prime\prime})\) may coincide.)
 We denote the set of critical values of the function \(R\) by \(\mathcal{V}\) :
\begin{equation}
\label{CrV}
\mathcal{V}=\mathcal{V}^{+}\cup\mathcal{V}^{-},\quad \mathcal{V}^{+}=\{t_1,\,\ldots\,,t_n\},\ \mathcal{V}^{-}=\{\overline{t_1},\,\ldots\,,\overline{t_n}\}.
\end{equation}
Not all values \(t_1,\,\ldots\,,t_n\) must be different.
However \(\mathcal{V}\not=\emptyset\).
In view of \eqref{NePr}, \(\textup{Im}\,t_k>0,\,1\leq k\leq n\). So
\begin{equation}
\label{CrVa}
\mathcal{V}^{+}\subset\{t\in\mathbb{C}:\,\textup{Im}\,t>0\},\quad
\mathcal{V}^{-}\subset\{t\in\mathbb{C}:\,\textup{Im}\,t<0\}.
\end{equation}

Ler \(G\) be an arbitrary simply connected domain in the \(t\)-plane which does not
intersect the set \(\mathcal{V}\). Then the roots of equation \eqref{Eqt} are pairwise
different for each \(t\in{}G\). We can enumerate these roots, say \(\nu_0(t),\nu_1(t),\,\ldots\,\nu_n(t)\), such that all functions \(\nu_k(t)\)
are holomorphic in \(G\).

  The strip \(S_h\),
  \begin{equation}
  \label{Str}
  S_h=\{t\in\mathbb{C}:|\textup{Im}\,t<h|\},\ \ \textup{where} \ \ h=\min\limits_{1\leq k\leq_n}\!\textup{Im}\,t_k,
  \end{equation}
   does not intersect the set \(\mathcal{V}\). So  \(n+1\) single valued holomorphic branches of the function \(\nu(t)\), \eqref{RoF}, are defined in the strip \(S_h\). \emph{We choose such enumeration of
these branches  which agrees with the enumeration \eqref{InSp} on \(\mathbb{R}\).}
The set \(\{\}\).

Let \(L\) be a Jordan curve in \(\mathbb{C}\) which possess the properties:
\begin{enumerate}
\item \(L\subset\{t\in\mathbb{C}: \textup{Im}\,t>-h\}\) ;
\item The  set \(\mathcal{V}^{+}\) is contained in the interior of the curve \(L\).
\item \(L\cap\mathbb{R}\not=\emptyset\).
\end{enumerate}
Let us choose and fix a point \(t_0\in L\cap \mathbb{R}\). We consider the curve \(L\)
as a loop with base point \(t_0\) oriented counterclockwise.
Each branch \(\nu_k\) of the function \(\nu(t)\), \eqref{RoF}, can be continued analytically along \(L\) from a small neighborhood of the point \(t_0\) considered as an initial point of the loop \(L\) to the same neighborhood of the point \(t_0\) but considered as a final point of this loop. Continuing analytically the branch indexed as \(\nu_k\), we come to the branch indexed
as \(\nu_{k-1}\), \(k=0,1,\,\ldots\,,n\). (We put \(\nu_{-1}\stackrel{\textup{\tiny def}}{=}\nu_n\).)

From \eqref{PDe} and \eqref{RaF} it follows that the polynomial \(P\) is
representable in the form
\begin{subequations}
\label{PQa}
\begin{gather}
\label{Pa}
P(z)=z\,Q(z)-\sum\limits_{k=1}^{n}\alpha_kQ_k(z),\\
\intertext{where}
\label{Qa}
Q_k(z)=Q(z)/(z-\mu_k),\quad k=1,2,\,\ldots\,,n.
\end{gather}
\end{subequations}
\section{Determinant representation of the polynomial pencil \(\boldsymbol{P(z)-tQ(z)}\).}

The polynomial pencil \(P\) is \emph{hyperbolic}: for each real \(t\), all
roots of the equation \eqref{EqEq} are real.

Using \eqref{PQa}, we represent the polynomial \(P(z)-tQ(z)\) as the characteristic polynomial \(\det(zI-(A+tB))\) of some matrix pencil,
where \(A\) and \(B\) are self-adjoint \((n+1)\times(n+1)\)
matrices, \(\textup{rank}\,B=1\). We present these matrices explicitly.
\begin{lem}
\label{LDeRe}
Let  \(A=\|a_{p,q}\|\) and \(B=\|b_{p,q}\|\), \(0\leq{}p,q\leq{}n\), be \((n+1)\times(n+1)\) matrices with the entries
\begin{gather}
a_{0,0}=0, \ a_{p,p}=\mu_p \ \textup{for} \ p=1,2,\,\ldots\,,n, \ \
 \notag\\
 a_{p,q}=0 \ \textup{for} \ p=1,2,\,\ldots\,,n,\ q=1,2,\,\ldots\,,n, \
 p\not=q, \label{MatA} \\
 a_{0,p}=\overline{a_{p,0}} \ \textup{for} \
 p=1,2,\,\ldots\,,n, \notag
\end{gather}
and
\begin{gather}
\label{MatB}
b_{0,0}=1,\  \textup{all other} \ b_{p,q} \ \textup{vanish.}
\end{gather}
Then the equality
\begin{equation}
\label{DeRe}
\det(zI-A-tB)=(z-t)\cdot{}Q(z)-\sum\limits_{k=1}^n|a_{k,n+1}|^2Q_k(z).
\end{equation}
holds.
\end{lem}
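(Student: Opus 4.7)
The plan is to compute $\det(zI-A-tB)$ directly, using the very simple sparsity structure of the matrices $A$ and $B$: only the first row, the first column, and the diagonal of $A+tB$ carry nonzero entries, so the matrix $M(z,t):=zI-A-tB$ is an ``arrowhead'' matrix. Explicitly,
\[
M(z,t)=\begin{pmatrix} z-t & -\overline{a_{1,0}} & \cdots & -\overline{a_{n,0}} \\ -a_{1,0} & z-\mu_1 & & \\ \vdots & & \ddots & \\ -a_{n,0} & & & z-\mu_n\end{pmatrix},
\]
with zeros in all unmarked positions. (I will treat the upper-right index $n+1$ in the statement as a misprint for $0$; the factors that actually appear are $|a_{k,0}|^2=a_{k,0}\overline{a_{k,0}}$.)

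Next I would evaluate the determinant by a Schur-complement (or equivalently, row-reduction) argument. Write $M(z,t)$ in block form $\left(\begin{smallmatrix} z-t & b \\ a & D \end{smallmatrix}\right)$, where $D=\operatorname{diag}(z-\mu_1,\dots,z-\mu_n)$, $a=(-a_{k,0})_{k=1}^n$, and $b=a^{*}$. For $z\notin\{\mu_1,\dots,\mu_n\}$ the block $D$ is invertible, and the standard identity
\[
\det M(z,t)=\det D \cdot \bigl((z-t)-bD^{-1}a\bigr)
\]
applies. A direct computation gives $bD^{-1}a=\sum_{k=1}^{n}\dfrac{|a_{k,0}|^2}{z-\mu_k}$ and $\det D=Q(z)$, so that
\[
\det M(z,t)=(z-t)\,Q(z)-\sum_{k=1}^{n}|a_{k,0}|^2\,\frac{Q(z)}{z-\mu_k}=(z-t)\,Q(z)-\sum_{k=1}^{n}|a_{k,0}|^2\,Q_k(z),
\]
by definition \eqref{Qa} of $Q_k$. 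Both sides being polynomials in $z$ that coincide off a finite set, the identity persists for all $z$, completing the proof. (Alternatively, one can reach the same conclusion without introducing $D^{-1}$ by cofactor expansion along the first column: the principal minor at $(0,0)$ contributes $(z-t)Q(z)$, and for each $k\geq 1$ the minor obtained by deleting row $k$ and column $0$ is, up to sign, a product of $z-\mu_j$ for $j\neq k$ with a single factor $-\overline{a_{k,0}}$ from row $0$; the signs combine to produce exactly $-|a_{k,0}|^2 Q_k(z)$.)

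There is no real obstacle here; the only subtlety is bookkeeping the sign conventions so that the cross-terms assemble with the correct sign and with $a_{k,0}\overline{a_{k,0}}=|a_{k,0}|^2$ (which is where the Hermitian condition $a_{0,k}=\overline{a_{k,0}}$ from \eqref{MatA} is used, guaranteeing that each arrow-tip pair contributes a nonnegative real coefficient). Using the Schur-complement formulation makes this bookkeeping automatic and reduces the whole lemma to a one-line calculation together with the polynomial-identity principle.
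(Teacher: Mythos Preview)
Your proof is correct and essentially matches the paper's: the paper writes out exactly the same arrowhead matrix and then says only ``We compute the determinant of this matrix using the cofactor formula,'' which is precisely the alternative you sketch at the end. Your primary Schur-complement route is a mild repackaging of the same computation (and your observation that $|a_{k,n+1}|^2$ is a typo for $|a_{k,0}|^2=|a_{0,k}|^2$ is right).
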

\begin{proof} The matrix \(zI-(A+tB)\) is of the form
\begin{equation*}
zI-(A+tB)=
\begin{bmatrix}
z-t&-a_{0,1}&-a_{0,2}&\cdots&-a_{0,n-1}&-a_{0,n}\\[1.2ex]
-\overline{a_{0,1}}&z-\mu_1&0&\cdots&0&0\\[1.2ex]
-\overline{a_{0,2}}&0&z-\mu_2&\cdots&0&0\\[1.2ex]
\hdotsfor[1.8]{6}\\[1.2ex]
-\overline{a_{0,n-1}}&0&0&\cdots&z-\mu_{n-1}&0\\[1.2ex]
-\overline{a_{0,n}}&0&0&\cdots&0&z-\mu_n
\end{bmatrix}
\end{equation*}
We compute the determinant of this matrix using the cofactor formula.
\end{proof}
Comparing \eqref{PQa} and \eqref{DeRe}, we see that if the conditions
\begin{equation}
\label{CruCo}
|a_{0,p}|^2=\alpha_p,\quad p=1,2,\,\ldots\,,n
\end{equation}
are satisfied, then the equality
\begin{equation}
\label{CruEq}
P(z)-tQ(z)=\det(zI-A-tB)
\end{equation}
holds for every \(z\in\mathbb{C}, t\in\mathbb{C}\).

The following result is an immediate consequence of Lemma \ref{LDeRe}.
\begin{thm}
\label{TDeRe}
Let \(R\) be a function of the form \eqref{RaF}, where \(\mu_1,\mu_2,\,\ldots\,,\mu_n\) are pairwise different real numbers and
\(\alpha_1,\alpha_2,\,\ldots\,,\alpha_n\) are positive numbers.
Let \(Q\) and \(P\) be the polynomials related to the the function \(R\) by
the equalities \eqref{QDe} and \eqref{PQa}.

Then the pencil of polynomials \(P(z)-tQ(z)\) is representable as the characteristic polynomial of the matrix pencil \(A+tB\), i.e. the equality \eqref{CruEq} holds for every \(z\in\mathbb{C}, t\in\mathbb{C}\), where \(B\) is the matrix with the entries \eqref{MatB}, and the entries of the matrix \(A\) are defined by
by \eqref{MatA} with
\begin{equation}
\label{UpRo}
a_{0,p}=\sqrt{\alpha_p}\,\omega_p, \quad p=1,2,\,\ldots\,,n,
\end{equation}
\(\omega_p\) are arbitrary%
\footnote{We will use the freedom in choosing \(\omega_p\) to prescribe signs \(\pm\) to the entries \(a_{0,p}\).
}
 complex numbers which absolute value equals one:
\begin{equation}
\label{Uni}
|\omega_p|=1,\quad p=1,2,\,\ldots\,,n.
\end{equation}
\end{thm}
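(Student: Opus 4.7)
The plan is to derive the theorem directly from Lemma \ref{LDeRe} by matching coefficients. Lemma \ref{LDeRe} tells us that, for any matrices \(A\) and \(B\) of the prescribed shape \eqref{MatA}--\eqref{MatB},
\begin{equation*}
\det(zI-A-tB)=(z-t)\,Q(z)-\sum_{k=1}^{n}|a_{0,k}|^{2}\,Q_{k}(z).
\end{equation*}
On the other hand, formula \eqref{PQa} rewrites \(P\) as \(P(z)=zQ(z)-\sum_{k=1}^{n}\alpha_{k}Q_{k}(z)\), so subtracting \(tQ(z)\) gives
\begin{equation*}
P(z)-tQ(z)=(z-t)\,Q(z)-\sum_{k=1}^{n}\alpha_{k}\,Q_{k}(z).
\end{equation*}
Thus the two expressions coincide identically in \(z\) and \(t\) if and only if \(|a_{0,k}|^{2}=\alpha_{k}\) for \(k=1,\dots,n\), which is precisely condition \eqref{CruCo}.

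The second step is to observe that, because only the squared moduli \(|a_{0,k}|^{2}\) appear in the lemma's determinant formula, the phases of the entries \(a_{0,k}\) are unconstrained. Hence any assignment \(a_{0,k}=\sqrt{\alpha_{k}}\,\omega_{k}\) with \(|\omega_{k}|=1\) realizes \(|a_{0,k}|^{2}=\alpha_{k}\) (and the remaining entries of \(A\) are determined by \eqref{MatA} together with the Hermitian symmetry \(a_{k,0}=\overline{a_{0,k}}\)), yielding \eqref{CruEq} for all \(z,t\in\mathbb{C}\).

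There is no real obstacle: the arithmetic identity \eqref{PQa}, already established, and the determinant computation of Lemma \ref{LDeRe} together force the equality \(P(z)-tQ(z)=\det(zI-A-tB)\) as soon as the diagonal \(\mu_k\)'s and the squared off-diagonal magnitudes are calibrated. The only content beyond the lemma is the explicit recording of the gauge freedom encoded by \eqref{UpRo}--\eqref{Uni}, which will be exploited later to assign signs to the matrix entries.
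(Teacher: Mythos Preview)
Your proof is correct and follows exactly the paper's approach: the paper states the theorem as an immediate consequence of Lemma~\ref{LDeRe}, obtained by comparing \eqref{PQa} with \eqref{DeRe} so that condition \eqref{CruCo} forces \eqref{CruEq}, and then recording the phase freedom \eqref{UpRo}--\eqref{Uni}. There is nothing to add.
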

\begin{cor}
\label{cor}
Let \(R, A, B\) be the same that in Theorem \ref{TDeRe}. For each \(t\in\mathbb{C}\), the roots \(\nu_0(t),\nu_0(t),\,\ldots\,,\nu_n(t)\) of the equation \eqref{RaF} are the eigenvalues
of the matrix \(A+tB\).
\end{cor}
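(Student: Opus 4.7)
The plan is to read off the corollary directly from Theorem \ref{TDeRe}. By that theorem, the identity
\[
P(z)-tQ(z)=\det(zI-A-tB)
\]
holds for every pair \((z,t)\in\mathbb{C}^{2}\), once the entries \(a_{0,p}\) are chosen according to \eqref{UpRo}. So for every fixed \(t\in\mathbb{C}\), the polynomial \(z\mapsto\det(zI-(A+tB))\) coincides with the polynomial \(z\mapsto P(z)-tQ(z)\).

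Next I would recall from \eqref{Eqt}--\eqref{EqEq} that \(R(z)=t\) is equivalent to \(P(z)-tQ(z)=0\), and that the roots of this latter equation (counted with multiplicity) are, by definition, the values \(\nu_{0}(t),\nu_{1}(t),\ldots,\nu_{n}(t)\). On the other hand, the eigenvalues of the \((n+1)\times(n+1)\) matrix \(A+tB\) are exactly the zeros of its characteristic polynomial \(\det(zI-(A+tB))\). Combining these two observations with the identity above gives the claimed coincidence between the roots \(\nu_{k}(t)\) and the eigenvalues of \(A+tB\).

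There is essentially no obstacle: the work has already been done in Lemma \ref{LDeRe} and Theorem \ref{TDeRe}. The only minor point worth flagging is multiplicity and the count: both sides are monic polynomials in \(z\) of degree \(n+1\) (on the right, because \(B\) has rank one and \(\det(zI-(A+tB))\) is always of degree \(n+1\) in \(z\) regardless of \(t\); on the left, by \eqref{Deg}), so the matching of roots is an equality of multisets of size \(n+1\). For real \(t\) this multiset is simple by \eqref{InSp}, while for complex \(t\) some \(\nu_{k}(t)\) may coincide, and correspondingly \(A+tB\) (which is no longer self-adjoint when \(\textup{Im}\,t\neq0\)) may fail to be diagonalizable; the statement of the corollary is nevertheless correct as an equality of spectra with multiplicities.
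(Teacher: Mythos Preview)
Your proposal is correct and is exactly the argument the paper has in mind: the corollary is stated immediately after Theorem~\ref{TDeRe} with no separate proof, since the identity \(P(z)-tQ(z)=\det(zI-A-tB)\) together with the equivalence of \eqref{Eqt} and \eqref{EqEq} yields the claim at once. Your additional remarks on degree and multiplicities are fine elaborations of what the paper leaves implicit.
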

\begin{lem}
\label{Trf}
Let \(R, A, B\) be the same that in Theorem \ref{TDeRe}, \(\nu_0(t),\nu_0(t),\,\ldots\,,\) \(\nu_n(t)\) be the roots of the equation \eqref{RaF} and \(h(z)\) be an entire function. Then the equality
\begin{equation}
\label{TrEqu}
\sum\limits_{k=0}^nh(\nu_k(t))=\textup{trace}\,\big\{h(A+tB)\big\}
\end{equation}
holds for every \(t\in\mathbb{C}\).
\end{lem}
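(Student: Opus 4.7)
The plan is to appeal to Corollary \ref{cor}, which identifies the numbers $\nu_0(t),\ldots,\nu_n(t)$ as the eigenvalues of the matrix $M:=A+tB$ counted with algebraic multiplicity (they are by definition the roots of $\det(zI-M)=P(z)-tQ(z)$). Once this identification is made, \eqref{TrEqu} reduces to the general linear-algebraic identity
\[
\textup{trace}\,h(M) = \sum_{k=0}^n h(\lambda_k),
\]
valid for any $(n+1)\times(n+1)$ complex matrix $M$, any entire function $h$, and any enumeration $\lambda_0,\ldots,\lambda_n$ of the eigenvalues of $M$ with algebraic multiplicity.

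To establish this general identity, I would first reduce to the polynomial case by expanding $h(z)=\sum_{j\ge 0} c_j z^j$. Since $h$ is entire, the matrix series $\sum_j c_j M^j$ converges absolutely in operator norm (dominated by $\sum_j |c_j|\,\|M\|^j$), so the trace, being linear and continuous, commutes with the summation, and it suffices to verify $\textup{trace}(M^j)=\sum_k \lambda_k^j$ for every integer $j\ge 0$. This power-sum identity I would derive from a Schur triangularization $M=UTU^{\ast}$: the matrix $T^j$ is upper triangular with diagonal entries $\lambda_k^j$, so its trace, and hence $\textup{trace}(M^j)$, equals $\sum_k \lambda_k^j$. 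Interchanging the order of summation (justified by absolute convergence of $\sum_j|c_j|\,|\lambda_k|^j$, since $|\lambda_k|\le\|M\|$) then yields $\textup{trace}\,h(M)=\sum_k h(\lambda_k)$.

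The only delicate point is the bookkeeping of multiplicities at the exceptional values $t\in\mathcal{V}$, where $A+tB$ may fail to be diagonalizable and some of the roots $\nu_k(t)$ coincide. The use of the Schur form rather than direct diagonalization is precisely what sidesteps this issue, since both sides of \eqref{TrEqu} are computed with respect to the same algebraic multiplicities inherited from $\det(zI-M)$. No analytic estimates in $t$ are needed: the identity is purely algebraic for each fixed $t$, and the entire-function hypothesis on $h$ enters only to legitimize the term-by-term manipulation of its power series.
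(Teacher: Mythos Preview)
Your argument is correct and follows essentially the same route as the paper: both invoke Corollary~\ref{cor} to identify the $\nu_k(t)$ with the eigenvalues of $A+tB$ and then reduce to the spectral identity $\textup{trace}\,h(M)=\sum_k h(\lambda_k)$. The paper simply cites the spectral mapping theorem and the equality of matrix trace with spectral trace, whereas you supply a self-contained proof of that identity via Schur triangularization and term-by-term passage through the power series; this extra detail is sound and in fact handles the non-diagonalizable case at $t\in\mathcal{V}$ more transparently than the paper's one-line appeal.
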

\begin{proof} We refer to Corollary \ref{cor}.
If \(\nu\) is an eigenvalue of some square  matrix \(M\), then \(h(\nu)\)
is an eigenvalue of the matrix \(h(M)\). In \eqref{TrEqu}, we interpret the trace
of the matrix \(h(A+tB)\) as its \emph{spectral trace}, that is as the sum
of all its eigenvalues.
\end{proof}

\section{Exponentially convex functions.}
 \begin{defn}
 \label{decf}
A function \(f(t)\) on the interval \(a<t<b\) is said to be belongs to the class
 \(W_{a,b}\) if \(f\) is continuous on \((a,b)\)
and if  all forms
\begin{equation}
\label{pqf}
\sum\limits_{r,s=1}^{N}f(t_r+t_s)\zeta_r\overline{\zeta_s}\quad (N=1,2,3,\ldots\,)
\end{equation}
are non-negative for every choice of complex numbers \(\zeta_1,\zeta_2,\,\ldots\,,\zeta_N\) and for every choice of real numbers \(t_1,t_2,\,\ldots\,,t_N\) assuming that all sums \(t_r+t_s\) are within the interval \((a,b)\).
\end{defn}
The class \(W_{a,b}\) was introduced by S.N.Bernstein, \cite{Be}, see \S 15 there.
Somewhat later, D.V.Widder also introduced the class \(W_{a,b}\) and studied it. S.N.Bernstein call functions \(f(x)\in{}W_{a,b}\) exponentially convex.

\noindent
\begin{center}
\textbf{Properties of the class of exponentially convex functions.}
\end{center}
\begin{enumerate}
\item[\textup{P\,1}.] If \(f(t)\in{}W_{a,b}\) and \(c\geq0\) is a nonnegative constant, then \(cf(t)\in{}W_{a,b}\).
\item[\textup{P\,2}.] If \(f_1(t)\in{}W_{a,b}\) and \(f_2(t)\in{}W_{a,b}\), then
\(f_1(t)+f_2(t)\in{}W_{a,b}\).
\item[\textup{P\,3}.] If \(f_1(t)\in{}W_{a,b}\) and \(f_2(t)\in{}W_{a,b}\), then
\(f_1(t)\cdot f_2(t)\in{}W_{a,b}\).
\item[\textup{P\,4}.] Let \(\lbrace f_{n}(t)\rbrace_{1\leq n<\infty}\) be a sequence of functions from the class \(W_{a,b}\). We assume that for each \(t\in(a,b)\) there exists the limit
\(f(t)=\lim_{n\to\infty}f_{n}(t)\), and that \(f(t)<\infty\ \forall t\in(a,b)\).
Then \(f(t)\in{}W_{a,b}\).
\end{enumerate}

From the functional equation for the exponential function
it follows that for each real number \(u\), for every choice of real numbers \(t_1,t_2,\,\ldots\,\), \(t_{N}\) and complex numbers
\(\zeta_1\), \(\zeta_2, \,\ldots\,, \zeta_{N}\), the equality holds
\begin{equation}
\label{ece}
\sum\limits_{r,s=1}^{N}e^{(t_r+t_s)\xi}\zeta_r\overline{\zeta_s}=
\bigg|\sum\limits_{p=1}^{N}e^{t_p{\xi}}\zeta_p\,\bigg|^{\,2}\geq 0.
\end{equation}
The relation \eqref{ece} can be formulated as
\begin{lem}
\label{ECE}
For each real number \(\xi\), the function \(e^{t\xi}\) of the variable \(t\) belongs to the class \(W_{-\infty,\infty}\).
\end{lem}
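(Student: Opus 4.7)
The plan is almost trivial, because the identity (\ref{ece}) displayed just above the lemma already does all the analytical work; what remains is simply to package it as the definition of the class $W_{-\infty,\infty}$ demands.

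First I would check the continuity requirement of Definition \ref{decf}: the function $t \mapsto e^{t\xi}$ is evidently continuous on $(-\infty,\infty)$ for every fixed real $\xi$, so the interval of definition is the whole real line, and any choice of real $t_1,\ldots,t_N$ automatically yields sums $t_r+t_s$ inside $(-\infty,\infty)$. So the side conditions of the definition impose nothing.

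Next I would verify the non-negativity of the quadratic forms (\ref{pqf}) with $f(t) = e^{t\xi}$. Here I rely on the functional equation of the exponential: $e^{(t_r+t_s)\xi} = e^{t_r\xi}\cdot e^{t_s\xi}$. Writing $w_p := e^{t_p\xi}$, the form becomes $\sum_{r,s} w_r\overline{w_s}\,\zeta_r\overline{\zeta_s}$ (note that $w_p$ is real, so $w_p = \overline{w_p}$), which factors as $\bigl|\sum_p w_p\zeta_p\bigr|^2 \geq 0$. This is precisely the content of equation (\ref{ece}).

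Since both requirements of Definition \ref{decf} hold, we conclude $e^{t\xi}\in W_{-\infty,\infty}$. There is no real obstacle here: the lemma is essentially the statement of (\ref{ece}) in the language of Definition \ref{decf}, and the only substantive ingredient is the multiplicative property of the exponential function, which converts the double sum into the modulus squared of a single sum.
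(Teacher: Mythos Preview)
Your proposal is correct and follows exactly the paper's approach: the paper simply states that Lemma~\ref{ECE} is a reformulation of the identity~\eqref{ece}, which you have unpacked explicitly by checking continuity and verifying the non-negativity via the factorization $\sum_{r,s}e^{(t_r+t_s)\xi}\zeta_r\overline{\zeta_s}=\bigl|\sum_p e^{t_p\xi}\zeta_p\bigr|^2$.
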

The term \emph{exponentially convex function} is justified by an integral
representation for any function \(f(t)\in{}W_{a,b}\).
\begin{thm}[The representation theorem]
\label{RepTe}
In order the representation
\begin{equation}
\label{IRep}
f(x)=\int\limits_{\xi\in(-\infty,\infty)}e^{\xi{}x}\sigma(d\xi)\quad(a<x<b)
\end{equation}
  be valid, where \(\sigma(d\xi)\) is a non-negative measure, it is necessary and
  sufficient that \(f(x)\in{}W_{a,b}\).
\end{thm}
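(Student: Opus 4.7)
The plan is to prove the two implications of this equivalence separately, the direction from \eqref{IRep} to $f\in W_{a,b}$ being straightforward and the converse being the substance of the theorem.

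\emph{Sufficiency.} Assume $f$ has the form \eqref{IRep}. Continuity on $(a,b)$ follows from dominated convergence, bounding $e^{\xi x}$ for $x$ in a compact subinterval $[\alpha,\beta]$ by the $\sigma$-integrable function $e^{\xi\alpha}+e^{\xi\beta}$. To verify condition \eqref{pqf}, I would interchange the sum with the integral and apply the algebraic identity \eqref{ece} inside, which yields
\begin{equation*}
\sum_{r,s=1}^{N}f(t_r+t_s)\zeta_r\overline{\zeta_s}=\int_{-\infty}^{\infty}\Bigg|\sum_{p=1}^{N}e^{t_p\xi}\zeta_p\Bigg|^{2}\sigma(d\xi)\geq 0.
\end{equation*}

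\emph{Necessity.} Assume now $f\in W_{a,b}$. The first task is to establish real-analyticity of $f$ on $(a,b)$. The PSD condition with $N=2$ at the pair $(x,y)$ forces $f\geq 0$ and $f(x+y)^{2}\leq f(2x)f(2y)$, so $\log f$ is midpoint convex and, by continuity, convex; iterating with larger $N$ and extracting finite differences, one shows that the Taylor coefficients $a_n=f^{(n)}(c)$ exist at each $c\in(a,b)$ and that the local Taylor series has positive radius of convergence. The second task is a moment problem. Fix $c\in(a,b)$ and substitute $t_r=c/2+\varepsilon s_r$ into \eqref{pqf}; the sums $t_r+t_s=c+\varepsilon(s_r+s_s)$ remain in $(a,b)$ for small $\varepsilon$, and Taylor expansion in $\varepsilon$ with a suitable polynomial choice of $\zeta_r$ yields that the Hankel matrices $[a_{p+q}]_{p,q=0}^{M}$ are positive semidefinite for every $M$. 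By the Hamburger moment theorem there exists a nonnegative Borel measure $\tau$ on $\mathbb{R}$ with $a_n=\int\xi^{n}\tau(d\xi)$, and summing the Taylor series of $f$ at $c$ gives
\begin{equation*}
f(c+u)=\sum_{n=0}^{\infty}\frac{a_n}{n!}u^{n}=\int_{-\infty}^{\infty}e^{\xi u}\tau(d\xi)
\end{equation*}
for $|u|$ sufficiently small, which is the representation \eqref{IRep} locally with $\sigma(d\xi)=e^{-c\xi}\tau(d\xi)$.

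\emph{Globalization and main obstacle.} To extend the representation to all of $(a,b)$, I would note that both sides of \eqref{IRep} are real-analytic in $x$ on the strip where the right-hand side converges, and that $f(x)<\infty$ for every $x\in(a,b)$ combined with monotone convergence forces $\int e^{\xi x}\sigma(d\xi)<\infty$ throughout that interval, so the two analytic functions coincide there. Uniqueness of $\sigma$ is a consequence of the injectivity of the two-sided Laplace transform on its strip of convergence. The main obstacle in this plan is the Taylor-coefficient extraction that transfers the continuous PSD condition \eqref{pqf} into the Hankel PSD condition on $\{f^{(n)}(c)\}$; once this is in place, everything else is either routine or a direct appeal to the classical Hamburger moment theorem.
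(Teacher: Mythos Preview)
The paper does not prove this theorem at all: immediately after the statement it writes ``The proof of the Representation Theorem can be found in [A],\,Theorem 5.5.4, and in [W],\,Theorem 21,'' citing Akhiezer and Widder, and moves on. So there is no in-paper argument to compare your proposal against; you are supplying a proof where the paper simply imports a classical result.

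On the content of your sketch: the sufficiency direction is correct and complete as written. For necessity, your strategy---reduce to the Hamburger moment problem for the sequence \(\{f^{(n)}(c)\}\)---is indeed one of the classical routes (essentially Bernstein's), and the passage from \eqref{pqf} to Hankel positivity via the Taylor substitution \(t_r=c/2+\varepsilon s_r\) is the right mechanism once smoothness is in hand. The genuine gap, which you yourself flag, is the step ``iterating with larger \(N\) and extracting finite differences, one shows that the Taylor coefficients \(a_n=f^{(n)}(c)\) exist.'' Midpoint convexity of \(\log f\) gives convexity, hence local Lipschitz continuity, but not \(C^\infty\) or real-analyticity; getting from the PSD condition \eqref{pqf} to the existence of all derivatives requires a separate argument (e.g.\ a mollification that preserves membership in \(W_{a,b}\), followed by a compactness/limit passage, or the determinant inequalities in Widder's Chapter~VI). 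Until that step is filled in, the Hamburger application is premature. Your globalization paragraph is fine once the local representation is secured.
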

The proof of the Representation Theorem can be found in \cite{A},\,Theorem 5.5.4, and in \cite{W},\,Theorem 21.
\begin{cor}
\label{Cor}
The representation \eqref{IRep} shows that \(f(x)\) is the value of a function \(f(z)\)
holomorphic in the strip \(a<\textup{Re}\,z<b\).
\end{cor}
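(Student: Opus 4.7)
The plan is to define the candidate extension by
\[
f(z)\stackrel{\textup{def}}{=}\int\limits_{-\infty}^{\infty}e^{\xi z}\sigma(d\xi),\qquad a<\textup{Re}\,z<b,
\]
and then verify in turn that the integral converges absolutely throughout the strip and that the resulting function of \(z\) is holomorphic there. On the real axis of the strip this agrees with the given \(f(x)\) by the hypothesis \eqref{IRep}, so this would establish the claim.

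For absolute convergence I would exploit the trivial identity \(|e^{\xi z}|=e^{\xi\,\textup{Re}\,z}\): since \(\sigma\geq 0\) and \(\int e^{\xi x}\sigma(d\xi)=f(x)<\infty\) for every \(x\in(a,b)\) by Theorem \ref{RepTe}, the integral defining \(f(z)\) is majorized in absolute value by \(f(\textup{Re}\,z)<\infty\). Moreover, on any compact subset \(K\) of the open strip one can pick \([x_1,x_2]\subset(a,b)\) with \(x_1\leq\textup{Re}\,z\leq x_2\) for all \(z\in K\), and then
\[
|e^{\xi z}|\leq e^{\xi x_1}+e^{\xi x_2}
\]
uniformly in \(z\in K\), the right-hand side being a \(\sigma\)-integrable majorant with integral \(f(x_1)+f(x_2)<\infty\).

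For holomorphy the cleanest route is Morera's theorem applied to any closed triangle \(T\) contained in the strip:
\[
\oint_{T}f(z)\,dz=\int\limits_{-\infty}^{\infty}\Bigl(\oint_{T}e^{\xi z}\,dz\Bigr)\sigma(d\xi)=0,
\]
since \(z\mapsto e^{\xi z}\) is entire for each fixed \(\xi\in\mathbb{R}\) and its integral along a closed contour vanishes. The interchange of the two integrations is legitimized by Fubini's theorem applied to the finite product measure on \(T\times\mathbb{R}\), using the uniform majorant from the previous step.

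The only non-routine step is verifying the hypothesis of Fubini, which reduces precisely to the uniform compact bound produced above; once this is in place the holomorphy is automatic. (An equivalent and equally short alternative is to differentiate under the integral sign, noting that \(|\xi e^{\xi z}|\) is still dominated on compact subsets of the strip by a \(\sigma\)-integrable function built from \(e^{\xi x_1'}+e^{\xi x_2'}\) for slightly wider \([x_1',x_2']\subset(a,b)\), whence \(f'(z)=\int\xi e^{\xi z}\sigma(d\xi)\) exists throughout the strip.)
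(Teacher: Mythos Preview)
Your argument is correct and is exactly the standard justification one has in mind here: define the extension by the same integral, use \(|e^{\xi z}|=e^{\xi\,\textup{Re}\,z}\) together with the finiteness of \(f(x_1)+f(x_2)\) to get a uniform integrable majorant on compacta, and conclude holomorphy via Morera (or differentiation under the integral). The paper itself gives no proof of this corollary---it is stated as an immediate consequence of the representation \eqref{IRep}---so your write-up simply spells out what the paper takes for granted.
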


\section{Herbert Stahl's Theorem.}
In the paper \cite{BMV} a conjecture was
formulated which now is commonly known as the BMV conjecture:\\[1.0ex]
\textbf{The BMV Conjecture.} \textit{Let \(U\) and \(V\) be Hermitian matrices of size
\(l\times{}l\).
Then the function
\begin{equation}
\label{TrF}
\varphi(t)=
\textup{trace}\,\big\{e^{U+tV}\big\}
\end{equation}
of the variable \(t\) belongs to the class \(W_{-\infty,\infty}\).}

If the matrices \(U\) and \(V\) commute, the exponential convexity of the function
\(\varphi(t)\), \eqref{TrF}, is evident. In this case, the sum
\[\sum\limits_{r,s=1}^{N}\varphi(t_r+t_s)\xi_r\overline{\xi_s}=
\textup{trace}\,\bigg\{e^{U/2}\Big(\sum\limits_{r=1}^{N}e^{t_rV}\xi_r\Big)
\Big(\sum\limits_{s=1}^{N}e^{t_sV}\xi_s\Big)^{\ast}\big(e^{U/2}\big)^{\ast}\bigg\}\]
is non-negative because this sum is the trace of a non-negative matrix. The measure
\(\sigma\) in the integral representation \eqref{IRep} of the function \(\varphi(t)\),
\eqref{TrF}, is an atomic measure supported on the spectrum of the matrix \(V\).

In general case, if the matrices \(U\) and \(V\) do not commute, the BMV conjecture remained an
open question for longer than 40 years. In 2011, Herbert
Stahl gave an affirmative answer to the BMV conjecture.
\begin{thm}
\label{HStT}
\textup{(H.Stahl)} Let \(U\) and \(V\) be  Hermitian matrices of size \(l\times{}l\).

Then the function \(\varphi(t)\) defined by \eqref{TrF} belongs to the class \(W_{-\infty,\infty}\) of functions exponentially convex on \(-\infty,\infty\).
\end{thm}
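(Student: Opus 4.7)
By the Representation Theorem (Theorem \ref{RepTe}), it suffices to exhibit a non-negative Borel measure $\sigma$ on $\mathbb{R}$ with $\varphi(t)=\int_{-\infty}^{\infty}e^{t\xi}\,\sigma(d\xi)$ for every real $t$. Since $U+tV$ is Hermitian for real $t$, its eigenvalues $\lambda_1(t),\ldots,\lambda_l(t)$ are real and real-analytic on the open set where they are simple, and $\varphi(t)=\sum_{j=1}^{l}e^{\lambda_j(t)}$ extends to an entire function of $t\in\mathbb{C}$ of exponential type at most $\|V\|$. Each branch $\lambda_j(t)$ is a sheet of the single algebraic function $\lambda$ defined by the irreducible factors of $\det(zI-U-tV)$; by hermiticity the branch points of $\lambda$ in the $t$-plane lie off $\mathbb{R}$ and occur in complex-conjugate pairs.

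The approach I would follow is the one devised by Stahl. First, rewrite the trace as a Cauchy-type contour integral
\begin{equation*}
\varphi(t)=\frac{1}{2\pi i}\oint_{\Gamma}e^{z}\,\frac{\partial}{\partial z}\log\det(zI-U-tV)\,dz,
\end{equation*}
where $\Gamma$ is a large circle enclosing the spectrum of $U+tV$ uniformly in $t$ on compacts. Second, deform $\Gamma$ onto a distinguished system of analytic arcs (the \emph{S-curves}) along which the branches of $\lambda(t)$ admit coherent single-valued continuations; the existence of such a symmetric system is a potential-theoretic fact on the Riemann surface of $\lambda$. Third, read off the jumps of $\log\det(zI-U-tV)$ across these arcs and interchange the $z$- and $t$-integrations to convert the deformed integral into one of the form $\int e^{t\xi}\sigma(d\xi)$ with $\sigma$ supported on a real set.

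The main obstacle — and the reason the BMV conjecture resisted proof for more than forty years — is the positivity of the resulting spectral density $\sigma$. In the commutative case $UV=VU$ positivity is immediate, as sketched before the statement. In the non-commutative case the branches $\lambda_j(t)$ may coalesce at complex diabolic points and the straightforward change of variables is multi-valued with delicate cancellations; what one must show is that, after the deformation above, these cancellations leave a genuinely non-negative residue. Stahl's resolution combines complex-analytic estimates for Padé approximants to $e^{z}$ with a potential-theoretic analysis on the Riemann surface of $\lambda$, and I do not see a route that bypasses it. In practice I would therefore adopt Theorem \ref{HStT} as a black box, referring to Stahl's original paper (and the subsequent expositions by Eremenko) for the positivity step, and use it as stated in what follows.
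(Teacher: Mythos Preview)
Your treatment matches the paper's: neither you nor the paper attempts an independent proof of Stahl's theorem. The paper simply records the statement, attributes it to Stahl, and cites \cite{S1,S2,S3} and Eremenko's exposition \cite{E}; you go a bit further by sketching the shape of Stahl's argument (contour integral for the trace, deformation to a distinguished system of arcs on the Riemann surface of the algebraic eigenvalue function, positivity of the resulting density) before likewise deferring to the literature. That sketch is broadly faithful to the structure of Stahl's proof, though of course the positivity step you flag as the crux is precisely where all the work lies and your outline does not pretend to supply it.

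One point worth noting, since you say you would ``use it as stated in what follows'': the paper does \emph{not} actually invoke the full Stahl theorem anywhere. Immediately after stating it, the paper formulates and proves a much weaker ``toy version'' (Theorem~\ref{toy}) under the extra hypotheses that $V$ is diagonal and all off-diagonal entries of $U$ are non-negative. That case has an elementary proof via the Lie product formula, since then $e^{U_\rho/m}$ has non-negative entries and $e^{tV/m}$ has exponentially convex entries, so closure of $W_{-\infty,\infty}$ under sums, products, and limits finishes it. Because the matrices $A,B$ arising from the determinant representation of the pencil have exactly this structure (with the freedom in the unimodular factors $\omega_p$ used to force the off-diagonal signs), the toy version suffices for Theorem~\ref{Eeco}. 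So while your plan to quote Stahl is not wrong, it is heavier machinery than the paper needs or uses.
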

The first arXiv version of H.Stahl's Theorem appeared in \cite{S1}, the latest arXiv version -
in \cite{S2}, the journal publication - in \cite{S3}.

The proof of Herbert Stahl is based on ingenious considerations related to
Riemann surfaces of algebraic functions. In \cite{E}, a simplified version of the Herbert Stahl proof is presented.

We present a toy version of Theorem \ref{HStT} which is enough for our goal.
\begin{thm}
\label{toy}
Let \(U\) and \(V\) be  Hermitian matrices of size \(l\times{}l\). We assume moreover
that
\begin{enumerate}
\item
All off-diagonal entries of the matrix \(U\) are non-negative.
\item
The matrix \(V\) is diagonal.
\end{enumerate}
 Then the function \(\varphi(t)\) defined by \eqref{TrF} belongs to the class \(W_{-\infty,\infty}\).
\end{thm}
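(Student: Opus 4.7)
The plan is to reduce $\varphi(t)$ to a pointwise limit of finite non-negative combinations of elementary exponentials $e^{t\xi}$, $\xi\in\mathbb{R}$, and then invoke Lemma~\ref{ECE} together with the stability properties P\,1, P\,2, P\,4 of the class $W_{-\infty,\infty}$.

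First I would split $U=D+N$, where $D=\textup{diag}(U_{11},\ldots,U_{ll})$ is the diagonal of $U$ and $N=U-D$ is its off-diagonal part. By hypothesis~(1) the matrix $N$ is real with non-negative entries; by hypothesis~(2) the matrices $D$ and $V$ commute, both being diagonal. Using the Lie--Trotter product formula I isolate the entrywise non-negative, $t$-independent piece $N$ from the commuting diagonal piece $D+tV$:
\begin{equation*}
\varphi(t)=\textup{trace}\,e^{U+tV}=\lim_{m\to\infty}\textup{trace}\,\bigl(e^{N/m}\,e^{(D+tV)/m}\bigr)^{m}=:\lim_{m\to\infty}\varphi_{m}(t).
\end{equation*}

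Next I expand each $\varphi_{m}(t)$ as a sum over cyclic index sequences. Set $A_{m}=e^{N/m}$; since $N$ has non-negative entries, every power $N^{k}$ does too, and hence $A_{m}$ has non-negative real entries which \emph{do not depend on $t$}. The matrix $e^{(D+tV)/m}$ is diagonal with $j$-th entry $e^{(U_{jj}+tv_{j})/m}$, where $v_{j}$ is the $j$-th diagonal entry of $V$. Therefore
\begin{equation*}
\varphi_{m}(t)=\sum_{i_{0},\ldots,i_{m-1}}(A_{m})_{i_{0}i_{1}}(A_{m})_{i_{1}i_{2}}\cdots(A_{m})_{i_{m-1}i_{0}}\cdot e^{\frac{1}{m}\sum_{k}U_{i_{k}i_{k}}}\cdot e^{t\cdot\frac{1}{m}\sum_{k}v_{i_{k}}}.
\end{equation*}
Each summand has the form $c\cdot e^{t\xi}$ with $c\geq0$ and $\xi=\frac{1}{m}\sum_{k}v_{i_{k}}\in\mathbb{R}$. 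By Lemma~\ref{ECE} every $e^{t\xi}$ lies in $W_{-\infty,\infty}$; by P\,1 each non-negative scalar multiple does too; by P\,2 the finite sum $\varphi_{m}$ lies in $W_{-\infty,\infty}$. Finally, property P\,4 together with pointwise convergence $\varphi_{m}(t)\to\varphi(t)$ yields $\varphi\in W_{-\infty,\infty}$.

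The crux of the argument is the non-negativity of the coefficient $c$ in front of each exponential $e^{t\xi}$, and this is exactly where both hypotheses are essential. Hypothesis~(1) gives the entrywise non-negativity of $N^{k}$ and hence of $A_{m}$; hypothesis~(2) allows a Lie--Trotter split in which $A_{m}$ carries no $t$-dependence at all, so the $t$-dependence of every summand is concentrated in a single honest exponential $e^{t\xi}$. If either hypothesis were dropped, negative coefficients could appear among the summands and the decomposition into elements of $W_{-\infty,\infty}$ would break down.
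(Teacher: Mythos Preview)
Your proof is correct and follows essentially the same strategy as the paper: a Lie--Trotter splitting into an entrywise non-negative, $t$-independent factor and a diagonal factor carrying all the $t$-dependence, followed by P\,1, P\,2, P\,4. The only difference is cosmetic---the paper shifts $U$ by $\rho I$ so that $U_{\rho}=U+\rho I$ itself is entrywise non-negative and then splits $e^{U_{\rho}+tV}$ as $\lim_{m}(e^{U_{\rho}/m}e^{tV/m})^{m}$, whereas you split off the diagonal of $U$ and combine it with $V$; your version avoids the shift and yields the same conclusion.
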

\begin{proof}
For \(\rho\geq 0\), let \(U_{\rho}=U+\rho{}I\), where \(I\) is the identity matrix.
If \(\rho\) is large enough, then all entries of the matrix \(U_{\rho}\) are non-negative.
Let us choose and fix such \(\rho\).
It is clear that
\begin{equation}
\label{PrF}
e^{U+tV}=e^{-t\rho}\,e^{U_{\rho}+tV}.
\end{equation}
We use the Lie product formula
\begin{equation}
\label{LPF}
e^{U_{\rho}+tV}=\lim_{m\to\infty}\Big(e^{U_{\rho}/m}\,e^{tV/m}\Big)^m.
\end{equation}
All entries of the matrix \(e^{U_{\rho}/m}\) are non-negative numbers.
Since matrix \(V\) is Hermitian, its diagonal entries are real numbers. Thus \[e^{tV/m}=\textup{diag\,(}e^{tv_1/m},e^{tv_2/m},\,\ldots\,e^{tv_m/m}\textup{)},\]
where \(v_1,v_2,\,\ldots\,,v_m\) are real numbers. The exponentials \(e^{tv_j/m}\)
are functions of \(t\) from the class \(W_{-\infty,\infty}\).
Each entry of the matrix \(e^{U_{\rho}/m}\,e^{tV/m}\) is a linear combination of these exponentials with non-negative coefficients. According to the properties P1 and P2 of the class \(W_{-\infty,\infty}\), the entries of the matrix
\(e^{U_{\rho}/m}\,e^{tV/m}\) are functions of the class \(W_{-\infty,\infty}\).
Each entry of the matrix \(\Big(e^{U_{\rho}/m}\,e^{tV/m}\Big)^m\) is a sum
of products of some entries of the matrix \(e^{U_{\rho}/m}\,e^{tV/m}\).
According to the properties P2 and P3 of the class \(W_{-\infty,\infty}\),
the entries of the matrix \(\Big(e^{U_{\rho}/m}\,e^{tV/m}\Big)^m\)
are functions of \(t\) belonging to the class \(W_{-\infty,\infty}\). From the limiting relation
\eqref{LPF} and from the property P4 of the class \(W_{-\infty,\infty}\)
it follows that all entries of the matrix \(e^{U_{\rho}+tV}\) are function
of \(t\) belonging to the class \(W_{-\infty,\infty}\). From \eqref{PrF}  it follows that all entries of the matrix \(e^{U+tV}\) belong
to the class \(W_{-\infty,\infty}\). All the more, the function
\(\varphi(t)=\textup{trace}\,\big\{e^{U+tV}\big\}\),
which is the sum of diagonal entries of the matrix \(e^{U+tV}\), belongs
to the class \(W_{-\infty,\infty}\).
\end{proof}

\section{Exponential convexity of the sum
\(\boldsymbol{e^{\xi\nu_{0}(t)}\,+\ldots+\,e^{\xi\nu_{n}(t)}.}\)}
Let \(\xi\) be a real number. Taking \(h(z)=e^{\xi{}z}\) in Lemma \ref{Trf},
we obtain
\begin{lem}
\label{AtE}
Let \(R\) be the rational function of the form \eqref{RaF},
\(\nu_{0}(t),\nu_1(t),\,\ldots\,,\) \(\nu_n(t)\) be the roots of the equation
\eqref{Eqt}. Let \(A\) and \(B\) be the matrices \eqref{MatA}, \eqref{UpRo},
\eqref{MatB} which appear in the determinant representation \eqref{CruEq}
of the matrix pencil \(P(z)-tQ(z)\).

Then the equality
\begin{equation}
\label{trEqu}
\sum\limits_{k=0}^{n}e^{\xi\,\nu_k(t)}=\textup{trace}
\big\{e^{\xi{}A+t(\xi{}B)}\big\}
\end{equation}
holds.
\end{lem}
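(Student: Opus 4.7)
The plan is essentially to specialize Lemma \ref{Trf} to the entire function $h(z)=e^{\xi z}$. For any real (indeed complex) $\xi$, the map $z\mapsto e^{\xi z}$ is an entire function, so the hypothesis of Lemma \ref{Trf} is satisfied, and that lemma yields
\[
\sum_{k=0}^{n}e^{\xi\,\nu_k(t)}\;=\;\operatorname{trace}\bigl\{e^{\xi(A+tB)}\bigr\}.
\]
The only remaining task is an algebraic rewriting of the argument of the exponential: since scalars commute with the matrix sum,
\[
\xi(A+tB)\;=\;\xi A+t(\xi B),
\]
so $e^{\xi(A+tB)}=e^{\xi A+t(\xi B)}$, giving the stated equality \eqref{trEqu}.

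There is no real obstacle here; the content of the lemma is packaged entirely in Lemma \ref{Trf} (which in turn rests on Corollary \ref{cor}, identifying the $\nu_k(t)$ as the eigenvalues of $A+tB$, and on the spectral mapping property $h(\operatorname{spec}(M))=\operatorname{spec}(h(M))$ interpreted via the spectral trace). The reason for writing the right-hand side in the form $e^{\xi A+t(\xi B)}$ rather than $e^{\xi(A+tB)}$ is not needed for the proof itself; it is a setup for the next step of the paper, where one applies Stahl's theorem (or its toy version, Theorem \ref{toy}) to the pair of Hermitian matrices $U=\xi A$ and $V=\xi B$ in order to deduce exponential convexity of the trace.
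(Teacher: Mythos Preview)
Your proof is correct and mirrors the paper's own argument exactly: the paper simply states that taking $h(z)=e^{\xi z}$ in Lemma~\ref{Trf} gives the result, and the algebraic rewriting $\xi(A+tB)=\xi A+t(\xi B)$ is immediate. Your additional remark about why the right-hand side is written in the form $e^{\xi A+t(\xi B)}$ is accurate and anticipates precisely how the paper proceeds.
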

Now we choose \(\omega_p\) in \eqref{UpRo} so that all off-diagonal entries of
the matrix \(U=\xi{}A\) are non-negative: if \(\xi>0\), then \(\omega_p=+1\), if
\(\xi<0\), then \(\omega_p=-1\), \(1\leq p\leq n\).

Applying Theorem \ref{toy} to the matrices \(U=\xi{}A, V=\xi{}B\), we obtain
the following result
\begin{thm}
\label{Eeco}
Let \(R\) be the rational function of the form \eqref{RaF},
\(\nu_{0}(t),\nu_1(t),\) \(\,\ldots\,, \nu_n(t)\) be the roots of the equation
\eqref{Eqt}.  Then for each \(\xi\in\mathbb{R}\), the function
\begin{equation}
g(t,\xi) \stackrel{\textup{\tiny def}}{=}\sum\limits_{k=0}^{n}e^{\xi\,\nu_k(t)}
\end{equation}
of the variable \(t\) belongs to the class \(W_{-\infty,\infty}\).
\end{thm}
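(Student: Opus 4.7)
The plan is to assemble the two principal tools already at our disposal: the trace formula in Lemma \ref{AtE} and the toy version of Stahl's theorem, Theorem \ref{toy}. Lemma \ref{AtE} gives
\[
g(t,\xi)=\sum_{k=0}^n e^{\xi\nu_k(t)}=\textup{trace}\bigl\{e^{\xi A+t(\xi B)}\bigr\},
\]
so it suffices to verify that the Hermitian matrices $U:=\xi A$ and $V:=\xi B$ satisfy the two structural hypotheses of Theorem \ref{toy}: the off-diagonal entries of $U$ are non-negative, and $V$ is diagonal.

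The diagonality of $V$ is immediate from \eqref{MatB}, since the only nonzero entry of $B$ is $b_{0,0}=1$ and scaling by $\xi$ preserves this. For $U$, the only potentially nonzero off-diagonal entries of $A$ are $a_{0,p}=\sqrt{\alpha_p}\,\omega_p$ and their conjugates for $1\leq p\leq n$, all other off-diagonal entries vanishing by \eqref{MatA}. Here I would exploit the freedom in the choice of the unimodular factors $\omega_p$ granted by Theorem \ref{TDeRe} (and flagged in its footnote): take $\omega_p=+1$ when $\xi>0$ and $\omega_p=-1$ when $\xi<0$. Then $\xi\,a_{0,p}=|\xi|\sqrt{\alpha_p}\geq 0$ and, taking conjugates, the entries $\xi\,a_{p,0}$ are likewise non-negative. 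Thus $U=\xi A$ is Hermitian with non-negative off-diagonal entries.

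With both hypotheses in place, Theorem \ref{toy} applied to $U$ and $V$ yields that the function $t\mapsto\textup{trace}\bigl\{e^{U+tV}\bigr\}$ belongs to $W_{-\infty,\infty}$, which by the displayed identity is precisely the statement that $g(\,\cdot\,,\xi)\in W_{-\infty,\infty}$. The case $\xi=0$ is handled separately and trivially: $g(t,0)=n+1$ is a non-negative constant, and the constant function $1=e^{t\cdot 0}$ is in $W_{-\infty,\infty}$ by Lemma \ref{ECE}, so $(n+1)\cdot 1\in W_{-\infty,\infty}$ by property P\,1.

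The entire argument is essentially bookkeeping, since the hard work has been done in Theorem \ref{toy} (which supplies the noncommutative BMV-type input) and in Theorem \ref{TDeRe} (which realises the pencil $P(z)-tQ(z)$ as the characteristic polynomial of the rank-one perturbation $A+tB$). The only point that requires genuine attention is the sign choice for $\omega_p$ adapted to the sign of $\xi$; without this calibration one could not arrange the off-diagonal entries of $\xi A$ to be non-negative simultaneously for all indices, and Theorem \ref{toy} would not apply.
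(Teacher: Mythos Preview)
Your argument is correct and follows the paper's own proof essentially verbatim: invoke Lemma~\ref{AtE} for the trace identity, choose $\omega_p=\operatorname{sign}(\xi)$ so that $U=\xi A$ has non-negative off-diagonal entries, and apply Theorem~\ref{toy} to $U=\xi A$, $V=\xi B$. Your separate treatment of $\xi=0$ is a harmless addition (Theorem~\ref{toy} in fact covers it directly, since $U=V=0$ trivially satisfy the hypotheses).
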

\begin{thm}
\label{Su}
Let
\begin{math}
%\label{abof}
f\in{}W_{u,v},\  \text{where} \ -\infty\leq{}u<v\leq+\infty.
 \end{math}
 Let \(R\) be the rational function of the form \eqref{RaF},
\(\nu_{0}(t),\nu_1(t),\) \(\,\ldots\,, \nu_n(t)\) be the roots of the equation
\eqref{Eqt}. Assume that for some \(a,b\), \(-\infty\leq{}a<b\leq+\infty\), the inequalities
\begin{equation}
\label{Ine}
u<\nu_k(t)<v,\quad a<t<b,\ \ k=0,1,\,\ldots\,,n
\end{equation}
hold.

Then the function
\begin{equation}
\label{SuSu}
F(t)\stackrel{\textup{\tiny def}}{=}\sum\limits_{k=0}^{n}f(\nu_k(t))
\end{equation}
belongs to the class \(W_{a,b}\).
\end{thm}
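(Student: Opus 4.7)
The plan is to combine the integral representation of exponentially convex functions (Theorem \ref{RepTe}) with the exponential case Theorem \ref{Eeco}. Once $f$ is written as an integral superposition of the exponentials $e^{\xi x}$, the sum $F(t)$ becomes a superposition of the functions $g(\cdot,\xi)$, each of which already belongs to $W_{-\infty,\infty}$, and hence to $W_{a,b}$.

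Concretely, I would first apply Theorem \ref{RepTe} to $f\in W_{u,v}$ to obtain a non-negative Borel measure $\sigma$ on $\mathbb{R}$ with
\begin{equation*}
f(x)=\int_{-\infty}^{\infty}e^{\xi x}\,\sigma(d\xi),\qquad u<x<v.
\end{equation*}
The hypothesis \eqref{Ine} ensures that for every $t\in(a,b)$ each value $\nu_k(t)$ lies in $(u,v)$, so substituting $x=\nu_k(t)$, summing over $k$, and exchanging the finite sum with the integral produces
\begin{equation*}
F(t)=\int_{-\infty}^{\infty}g(t,\xi)\,\sigma(d\xi),
\end{equation*}
where $g(t,\xi)=\sum_{k=0}^{n}e^{\xi\nu_k(t)}$ is the function of Theorem \ref{Eeco}. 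I would then verify the quadratic-form condition of Definition \ref{decf}: for any real $t_1,\ldots,t_N$ with all $t_r+t_s\in(a,b)$ and any complex $\zeta_1,\ldots,\zeta_N$, a further interchange of the finite sum with the integral gives
\begin{equation*}
\sum_{r,s=1}^{N}F(t_r+t_s)\zeta_r\overline{\zeta_s}=\int_{-\infty}^{\infty}\Big(\sum_{r,s=1}^{N}g(t_r+t_s,\xi)\zeta_r\overline{\zeta_s}\Big)\sigma(d\xi).
\end{equation*}
By Theorem \ref{Eeco} the inner bracket is non-negative for every real $\xi$, so integrating against the non-negative measure $\sigma$ produces a non-negative number.

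The only technical points are the finiteness of the integral $\int g(t,\xi)\,\sigma(d\xi)$ and the continuity of $F$ on $(a,b)$. Finiteness follows from the interspersing $\nu_n(t)\leq\nu_k(t)\leq\nu_0(t)$, which yields the pointwise bound
\begin{equation*}
g(t,\xi)\leq (n+1)\bigl(e^{\xi\nu_0(t)}+e^{\xi\nu_n(t)}\bigr),
\end{equation*}
whose right-hand side is $\sigma$-integrable because both $\nu_0(t)$ and $\nu_n(t)$ lie in $(u,v)$, where the representation for $f$ converges. Continuity of $F$ on $(a,b)$ is inherited from continuity (in fact real analyticity) of each $\nu_k$ together with continuity of $f$ on $(u,v)$. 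I do not anticipate a substantive obstacle; the theorem is a routine reduction of the general $f\in W_{u,v}$ to the pure exponential case already established in Theorem \ref{Eeco}.
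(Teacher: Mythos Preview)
Your proposal is correct and follows essentially the same route as the paper: apply the representation theorem to $f$, substitute $x=\nu_k(t)$, sum to obtain $F(t)=\int g(t,\xi)\,\sigma(d\xi)$, and then invoke Theorem~\ref{Eeco}. The only cosmetic difference is that the paper finishes by appealing to the closure properties P1, P2, P4 of $W_{a,b}$ (implicitly approximating the integral by finite sums and passing to the limit), whereas you verify the quadratic-form inequality directly by interchanging the finite sum with the $\sigma$-integral; your version is in fact more explicit about the integrability and continuity details that the paper leaves unsaid.
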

\begin{proof} According to Theorem \ref{RepTe}, the representation
 \begin{equation*}
 f(x)=\int\limits_{\xi\in(-\infty,\infty)}e^{\xi{}x}\sigma(d\xi), \ \ \forall\,x\in(u,v)
 \end{equation*}
  holds, where \(\sigma\) is a non-negative measure.
 Substituting \(x=\nu_k(t)\) to the above formula, we obtain the equality
 \begin{equation*}
 f(\nu_k(t))=\int\limits_{\xi\in(-\infty,\infty)}e^{\xi\nu_k(t)}\sigma(d\xi), \ \
 \forall\,t\in(a,b),\ \ k=0,1,\,\ldots\,,n.
 \end{equation*}
 Hence
 \begin{equation}
 F(t)=\int\limits_{\xi\in(-\infty,\infty)}g(t,\xi)\,\sigma(d\xi), \ \ \forall\,t\in(a,b).
 \end{equation}
 Theorem \ref{SuSu} is a consequence of Theorem \ref{Eeco} and of the properties
 P1,P2,P4 of the class of exponentially convex functions.
\end{proof}
\noindent
\textbf{Example} For \(\gamma>0\), the function \(f(x)=e^{\gamma{}x^2}\) is exponentially convex on \((-\infty,\infty)\):
\(e^{\gamma{}x^2}=\int\limits_{\xi\in(-\infty,\infty)}e^{\xi{}x}\sigma(d\xi), \ \
\textup{where}
\ \ \sigma(d\xi)=\frac{1}{2\sqrt{\pi\gamma}}e^{-\xi^2/4\gamma}d\xi.\)\\
Thus the function \(F(t)=\sum\limits_{k=0}^ne^{\gamma(\nu_k(t))^2}\)
is exponentially convex on \((-\infty,\infty)\).
\begin{rem}
Familiarizing himself with our proof of Theorem \ref{Eeco}, Alexey Kuznetsov
(\,\,www.math.yorku.ca/\raisebox{-0.8ex}{\~}{}akuznets/\,\,)
 gave a new proof of a somewhat weakened version of this theorem. His proof is based on the theory of stochastic processes L\'{e}vy.
\end{rem}

% ------------------------------------------------------------------------
\end{document}